\newtheorem{theorem}{Theorem}
\newtheorem{lemma}{Lemma}
\newtheorem{false statement}{False statement}
\theoremstyle{definition}
\newtheorem{claim}{Claim}
\newtheorem{case}{Case}
\newtheorem{subcase}{Case}[case]
\newtheorem{subsubcase}{Case}[subcase]
\begin{document}
\title{\bf\Large Bipartite graphs with the maximum sum of squares of degrees\thanks{Supported  by NSFC
(No.~10871158).}}

\date{August 9, 2009}

\author{Shenggui Zhang\thanks{Corresponding author. E-mail address: sgzhang@nwpu.edu.cn (S. Zhang).} and Chuncao
Zhou\\[2mm]
\small Department of Applied Mathematics,
\small Northwestern Polytechnical University,\\
\small Xi'an, Shaanxi 710072, P.R.~China\\}
\maketitle

\begin{abstract}
In this paper we determine all the bipartite graphs with the maximum
sum of squares of degrees among the ones with a given number of
vertices and edges.

\medskip
\noindent {\bf Keywords:} Bipartite graphs; Sum of squares of
degrees; Extremal graphs
\smallskip

\noindent {\bf AMS Subject Classification (2000):} 05C07 05C35
\end{abstract}

\section{Introduction}

All graphs considered here are finite, undirected and simple. For
terminology and notation not defined here we follow those in Bondy
and Murty \cite{Bondy_Murty}.

In this paper we study an extremal problem on bipartite graphs:
among all bipartite graphs with a given number of vertices and
edges, find the ones where the sum of squares of degrees is maximum.

The corresponding problem for general graphs has been studied in
\cite{Ahlswede_Kanota,Boesch,Peled_Petreschi_Sterbini}. For all
graphs with a given number vertices and edges, Ahlswede and Kanota
\cite{Ahlswede_Kanota} first determined the maximum sum of squares
of degrees. Boesch et al. \cite{Boesch} proved that if the sum of
squares of degrees attains the maximum, then the graph must be a
threshold graph (See the definition in \cite{Mahadev_Peled}). They
constructed two threshold graphs and proved that at least one of
them is such an extremal graph. Peled et al.
\cite{Peled_Petreschi_Sterbini} further studied this problem and
showed that, if a graph has the maximum sum of squares of degrees,
then it must belong to one of the six particular classes of
threshold graphs.

For the family of bipartite graphs with a given number of vertices
and edges and the size of one partite side, Ahlswede and Kanota
\cite{Ahlswede_Kanota} determined a bipartite graph such that the
sum of squares of its degrees is maximum. Recently, Cheng et al.
\cite{Cheng_Guo_Zhang_Du} determined the maximum sum of squares of
degrees for bipartite graphs with a given number of vertices and
edges.

While the problem of finding all the graphs with a given number of
vertices and edges where the sum of squares of degrees is maximum is
still unsolved, we give a complete solution to the problem of
finding all the bipartite graphs with a given number of vertices and
edges where the sum of squares of degrees is maximum in this paper.
In Section 2 we present some notation and lemmas that will be used
later and in Section 3 give the main results and the proof.

\section{Notation and lemmas}

Let $x$ be a real number. We use $\lfloor x\rfloor$ to represent the
largest integer not greater than $x$ and $\lceil x\rceil$ to
represent the smallest integer not less than $x$. The sign of $x$,
denoted by $sgn(x)$, is defined as $1$, $-1$, and $0$ when $x$ is
positive, negative and zero, respectively.

Let $n$, $m$ and $k$ be three positive integers. We use $B(n,m)$ to
denote a bipartite graph with $n$ vertices and $m$ edges, and
$B(n,m,k)$ to denote a $B(n,m)$ with a bipartition $(X,Y)$ such that
$|X|=k$. By $\mathscr{B}(n,m)$ we denote the set of graphs of the
form $B(n,m)$ and $\mathscr{B}(n,m,k)$ the set of graphs of the form
$B(n,m,k)$.

Suppose that $n$, $m$ and $k$ are three integers with $n\geq 2$,
$0\leq m\leq \lfloor \frac{n}{2}\rfloor\lceil \frac{n}{2}\rceil$ and
$1\leq k\leq n-1$. Let $m=qk+r$, where $0\leq r<k$. Then
$B^l(n,m,k)$ is defined as a bipartite graph in $\mathscr{B}(n,m,k)$
such that $q$ vertices in $Y$ are adjacent to all the vertices of
$X$ and one more vertex in $Y$ is adjacent to $r$ vertices in $X$ if
$r>0$.

We use $\mathscr{G}(n,m)$ to denote the family of graphs with $n$
vertices and $m$ edges. Given an integer $t\geq 2$, and a graph
$G\in\mathscr{G}(n,m)$, let
$$
\sigma_t(G)=\sum_{v\in V(G)}(d(v))^t.
$$

The following result is due to Ahlswede and Kanota
\cite{Ahlswede_Kanota}.

\begin{lemma}[Ahlswede and Kanota \cite{Ahlswede_Kanota}]
Let $n,m$ and $k$ be three integers with $n\geq 2$,
$0\leq{m}\leq\lfloor\frac{n}{2}\rfloor\lceil \frac{n}{2}\rceil$ and
$\lceil\frac{n}{2}\rceil\leq k\leq n-1$. Suppose that $m=qk+r$,
where $0\leq{r}<k$. Then $\sigma_2(B^l(n,m,k))$ attains the maximum
value among all the graphs in $\mathscr{B}(n,m,k)$.
\end{lemma}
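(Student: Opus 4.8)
The plan is to translate the problem into an extremal question about Young diagrams inside a box. Write the $(n-k)\times k$ biadjacency matrix of a graph $G\in\mathscr{B}(n,m,k)$, with $Y$ indexing rows and $X$ indexing columns; then $\sigma_2(G)=\sum_{y\in Y}d(y)^2+\sum_{x\in X}d(x)^2$ is the sum of the squares of all row sums and all column sums, and equivalently $\sigma_2(G)=2m+2\sum_{v}\binom{d(v)}{2}$, so maximizing $\sigma_2$ is the same as maximizing the number of paths of length two (cherries). First I would fix the degree sequence $\lambda=(d(y))_{y\in Y}$ on the smaller side and ask which admissible column sums maximize $\sum_x d(x)^2$. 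By the Gale--Ryser theorem the conjugate partition $\lambda^{*}$ dominates every realizable column-sum sequence, and $\sum_x d(x)^2$ is Schur-convex, so the maximum is attained by the nested (chain) realization in which the neighborhoods on each side are linearly ordered by inclusion. Hence it suffices to maximize $F(\lambda):=\sum_i\lambda_i^2+\sum_i(\lambda_i^{*})^2$ over partitions $\lambda$ of $m$ fitting in the $(n-k)\times k$ box (at most $n-k$ parts, each at most $k$). Since $B^l(n,m,k)$ is precisely the chain graph whose $Y$-sequence is $\mu=(k,\dots,k,r)$ with $q$ copies of $k$, the claim becomes $F(\mu)\ge F(\lambda)$ for all such $\lambda$.

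Next I would record the two facts that govern the comparison. On the one hand $\mu$ is the dominance-maximum among all partitions of $m$ with parts at most $k$, because each partial sum $\mu_1+\cdots+\mu_t=\min(tk,m)$ is as large as the constraints allow; by Schur-convexity this gives $\sum_i\lambda_i^2\le\sum_i\mu_i^2$. On the other hand conjugation reverses dominance, so $\mu^{*}$ is dominated by every $\lambda^{*}$ and therefore $\sum_i(\lambda_i^{*})^2\ge\sum_i(\mu_i^{*})^2$. The two summands of $F$ thus pull in opposite directions, and $F(\mu)\ge F(\lambda)$ asserts that the gain $\sum_i\mu_i^2-\sum_i\lambda_i^2$ on the $Y$-side always outweighs the loss $\sum_i(\lambda_i^{*})^2-\sum_i(\mu_i^{*})^2$ on the $X$-side. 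The intuition for why this should hold is exactly the cherry count: a $Y$-vertex of full degree $k$ is the center of $\binom{k}{2}$ paths of length two, whereas a saturated $X$-vertex is the center of only $\binom{n-k}{2}$, so when $\binom{k}{2}\ge\binom{n-k}{2}$ it is more profitable to saturate the $Y$-vertices.

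This last inequality is the heart of the matter, and it is the only place where the hypothesis $k\ge\lceil n/2\rceil$, i.e. $|X|\ge|Y|$, is used; without it the statement is simply false, since one would instead want to saturate the $X$-side. The difficulty I expect to be the main obstacle is that the trade-off cannot be settled by a greedy exchange: transferring a single cell from a shorter row to a longer one changes $\tfrac12 F$ by $(\lambda_i-\lambda_j)+2-D$, where $D$ counts the rows whose length lies between $\lambda_j$ and $\lambda_i$, and this quantity is not sign-definite, so the optimum (which is typically not unique) is in general unreachable by monotone single-box moves. My plan is therefore to argue globally: $F$ is a convex function of $\lambda$, so I would reduce the maximization to the finitely many ``two-block staircase'' configurations $\lambda=(k^{s},c^{t},0,\dots)$ consisting of $s$ full rows, a block of $t$ equal rows of common height $c$, and empty rows. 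For these, $F$ can be written in closed form, and $F(\mu)\ge F(k^{s},c^{t})$ reduces after simplification to a scalar inequality that holds precisely because $\binom{k}{2}\ge\binom{n-k}{2}$. Proving that the maximum is indeed attained at such a staircase, so that this finite comparison is exhaustive, is the delicate step; I would establish it by an explicit compression that sweeps cells upward and to the left one column at a time and verifying that, under $n-k\le k$, this operation never decreases $F$.

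Finally, with $F(\mu)\ge F(\lambda)$ in hand, I would read the conclusion back into graph language. The maximum of $\sigma_2$ over $\mathscr{B}(n,m,k)$ equals $F(\mu)=qk^2+r^2+r(q+1)^2+(k-r)q^2$ and is attained by the chain graph with $Y$-sequence $\mu$, which is exactly $B^l(n,m,k)$; the feasibility of $\mu$ (that it has at most $n-k$ parts) follows from $m\le k(n-k)$, the condition needed for $\mathscr{B}(n,m,k)$ to be nonempty. This would complete the proof.
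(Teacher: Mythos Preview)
The paper does not prove this lemma at all: it is quoted from Ahlswede and Katona and used as a black box, so there is no in-paper argument to compare against. That said, your outline follows the classical route. The reduction to chain graphs is entirely correct: for a fixed $Y$-degree sequence $\lambda$, Gale--Ryser says every realizable $X$-degree sequence is majorized by $\lambda^{*}$, and Schur-convexity of the sum of squares then forces the optimum to be the nested realization, so the problem becomes the partition inequality $F(\mu)\ge F(\lambda)$ over partitions of $m$ in the $(n-k)\times k$ box. You also correctly isolate the role of the hypothesis $k\ge\lceil n/2\rceil$ and correctly observe that single-box transfers are not monotone for $F$.

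The genuine gap is the sentence ``$F$ is a convex function of $\lambda$, so I would reduce the maximization to the finitely many two-block staircase configurations.'' This is not true in the sense you need. Writing $F(\lambda)=\sum_i\lambda_i^2+\sum_j(\lambda_j^{*})^2$ and passing to the continuous picture (decreasing step functions $f$ on $[0,n-k]$), one has
\[
F(f)=\int f(x)^2\,dx+\int\!\!\int \min\bigl(f(x_1),f(x_2)\bigr)\,dx_1\,dx_2,
\]
which is a sum of a convex and a \emph{concave} functional of $f$; so you cannot conclude that the maximum over the simplex of feasible $f$ sits at an extreme point (the two-block staircases) by convexity alone. In fact the very non-monotonicity of single-box moves you noted is a symptom of this: $F$ is neither Schur-convex nor Schur-concave on the box, and the optimum can be non-unique (e.g.\ $n=5$, $k=3$, $m=4$, where $(3,1)$ and $(2,2)$ both give $F=16$).

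What actually drives the Ahlswede--Katona argument is a more careful compression that moves an entire \emph{column} of cells at once (rather than one box) together with the numerical inequality $k\ge n-k$; each such column sweep can be shown to be nondecreasing for $F$ under that hypothesis, and iterating it reaches $\mu$. Your last paragraph gestures at exactly this, but the justification cannot be ``convexity''; you will need to compute the effect of the column move directly and check its sign using $k\ge n-k$. Once that is done, the closed-form value $F(\mu)=qk^{2}+r^{2}+r(q+1)^{2}+(k-r)q^{2}$ and the identification with $B^{l}(n,m,k)$ are as you wrote.
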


With this result, Cheng et al. \cite{Cheng_Guo_Zhang_Du} obtained
the following

\begin{lemma}[Cheng, Guo, Zhang and Du \cite{Cheng_Guo_Zhang_Du}]
Let $n,m$ be two integers with $n\geq 2$, $n\leq{m}\leq
\lfloor\frac{n}{2}\rfloor\lceil \frac{n}{2}\rceil$ and
$k_0=max\{k|m=qk+r,0\leq r<k, \lceil \frac{n}{2}\rceil\leq{k}\leq
n-q-sgn(r)\}$.  Then $\sigma_2(B^l(n,m,k_0))$ attains the maximum
value among all the bipartite graphs in $\mathscr{B}(n,m)$.
\end{lemma}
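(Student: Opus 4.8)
The plan is to reduce the optimization over all of $\mathscr{B}(n,m)$ to an optimization over the one-parameter family $\{B^l(n,m,k)\}_k$, and then to show that $\sigma_2(B^l(n,m,k))$ is non-decreasing in $k$, so that the largest admissible index $k=k_0$ wins. First I would observe that every $G\in\mathscr{B}(n,m)$ admits a bipartition; labelling the larger side $X$ puts $G\in\mathscr{B}(n,m,k)$ for some $k$ with $\lceil n/2\rceil\le k\le n-1$. Since $G$ has $m$ edges on sides of sizes $k$ and $n-k$ we have $m\le k(n-k)$, which forces $n-k\ge\lceil m/k\rceil=q+\mathrm{sgn}(r)$, i.e. $k\le n-q-\mathrm{sgn}(r)$; hence $k$ is one of the indices appearing in the definition of $k_0$. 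By Lemma 1, $\sigma_2(G)\le\sigma_2(B^l(n,m,k))$, so
$$
\max_{G\in\mathscr{B}(n,m)}\sigma_2(G)=\max\{\sigma_2(B^l(n,m,k)):\lceil n/2\rceil\le k\le n-q-\mathrm{sgn}(r)\},
$$
and it remains only to locate the maximizing $k$.

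Next I would record a closed form for the objective. Writing $m=qk+r$, the graph $B^l(n,m,k)$ has $q$ vertices of degree $k$ and, if $r>0$, one vertex of degree $r$ in $Y$, while $X$ has $r$ vertices of degree $q+1$ and $k-r$ of degree $q$; summing squares and using $qk=m-r$ gives
$$
\sigma_2(B^l(n,m,k))=m(k+q)+r(q+r+1-k).
$$
The key step is then the monotonicity claim $\sigma_2(B^l(n,m,k+1))\ge\sigma_2(B^l(n,m,k))$ on the admissible range. Because $k\ge\lceil n/2\rceil$ and $m\le\lfloor n/2\rfloor\lceil n/2\rceil\le k^2$, one has $q\le k$ and, crucially, $m<k(k+1)$, so passing from $k$ to $k+1$ decreases $q=\lfloor m/k\rfloor$ by at most one. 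This splits the comparison into exactly two cases.

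When $q$ is unchanged the new remainder is $r-q$ and a short computation gives the difference $2q(k-r)\ge 0$; when $q$ drops to $q-1$ the new remainder is $k+1+r-q$ and the difference simplifies to $2r(k-q)\ge 0$. The main obstacle is precisely this second, $q$-dropping case, where both quotient and remainder change discontinuously: the point is that the seemingly awkward terms cancel to leave the manifestly non-negative $2r(k-q)$, using $k\ge q$. Granting monotonicity, I would finally verify that the admissible set of $k$ is an interval with right endpoint $k_0$: the map $k\mapsto k+\lceil m/k\rceil$ is non-decreasing for $k\ge\lceil n/2\rceil$ (again because $m<k(k+1)$ keeps consecutive ceilings within one of each other), so feasibility $k+\lceil m/k\rceil\le n$ holds exactly for $\lceil n/2\rceil\le k\le k_0$. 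Combining the reduction, the monotonicity, and this interval description shows $\sigma_2(B^l(n,m,k_0))$ is the maximum over $\mathscr{B}(n,m)$, as claimed.
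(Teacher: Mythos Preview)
Your proposal is correct. Note, however, that the paper does not actually prove Lemma~2: it is quoted from Cheng, Guo, Zhang and Du \cite{Cheng_Guo_Zhang_Du} without proof. That said, the monotonicity argument you give is essentially the same computation the paper later carries out inside the proof of Theorem~2 (Claim~3 together with the displays labelled (1), (2) and (3) there): one reduces to the family $B^l(n,m,k)$ via Lemma~1, writes down a closed form for $\sigma_2(B^l(n,m,k))$, observes that $\lfloor m/k\rfloor-\lfloor m/(k+1)\rfloor\in\{0,1\}$ because $m<k(k+1)$, and checks in each of the two cases that $f(k+1)-f(k)\ge 0$. Your expressions $2q(k-r)$ and $2r(k-q)$ are exactly the paper's displays (1) and (2) after substituting $m=qk+r$. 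The only extra ingredient in your write-up is the explicit verification that the admissible set of $k$ is an interval; this is not strictly needed (monotonicity of $f$ on all of $k\ge\lceil n/2\rceil$ already gives $f(k)\le f(k_0)$ for every admissible $k\le k_0$), but it does no harm.
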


For general graphs with few edges, Ismailescu and Stefanica
\cite{Ismailescu_Stefanica} got the following result.

\begin{lemma}[Ismailescu and Stefanica \cite{Ismailescu_Stefanica}]
Let $n,m$ and $t$ be three integers with $n\geq 2$, $m\leq n-2$ and
$t\geq 2$. Suppose that $\sigma_t(G^\ast)$ attains the maximum value
among all the graphs in $\mathscr{G}(n,m)$. Then $G^\ast\cong
K_{1,m}\cup S_{n-m-1}$, the star with $m$ edges plus $n-m-1$
isolated vertices, except the case $t=2$ and $m=3$, where both
$\sigma_t(K_{1,3}\cup S_{n-4})$ and $\sigma_t(K_3\cup S_{n-3})$
attains the maximum.
\end{lemma}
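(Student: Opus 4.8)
The plan is to convert the vertex sum $\sigma_t(G)$ into a sum over edges and then bound each edge's contribution separately. First I would record the identity
\[
\sigma_t(G)=\sum_{v\in V(G)}d(v)^t=\sum_{v\in V(G)}d(v)\,d(v)^{t-1}=\sum_{uv\in E(G)}\bigl(d(u)^{t-1}+d(v)^{t-1}\bigr),
\]
which holds because each vertex $v$ lies on exactly $d(v)$ edges, so that the weight $d(v)^{t-1}$ is counted $d(v)$ times when we sum over incident edges. This reduces the problem to maximizing a sum of $m$ edge-terms of the form $d(u)^{t-1}+d(v)^{t-1}$, which is far more tractable than controlling the whole degree sequence at once.

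Next I would bound a single edge-term. For any edge $uv$, the edges meeting $u$ or $v$ number $d(u)+d(v)-1$, since $uv$ is the only edge counted by both $d(u)$ and $d(v)$; hence $d(u)+d(v)\le m+1$, with equality exactly when every edge of $G$ is incident to $u$ or to $v$. Because $x\mapsto x^{t-1}$ is convex on $[0,\infty)$ for $t\ge 2$, the maximum of $d(u)^{t-1}+d(v)^{t-1}$ subject to $1\le d(u),d(v)$ and $d(u)+d(v)\le m+1$ is attained at the extreme point $\{d(u),d(v)\}=\{1,m\}$ and equals $m^{t-1}+1$. Summing over all $m$ edges gives
\[
\sigma_t(G)\le m\bigl(m^{t-1}+1\bigr)=m^t+m=\sigma_t\bigl(K_{1,m}\cup S_{n-m-1}\bigr),
\]
so the star with isolated vertices, which is a valid member of $\mathscr{G}(n,m)$ precisely because $m\le n-2$, attains the maximum value.

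The delicate part, and the step I expect to be the main obstacle, is characterizing the equality cases, since this is exactly where the exceptional case must surface. Equality in the displayed bound forces each of the $m$ edge-terms to be maximal. For $t>2$ the function $x^{t-1}$ is \emph{strictly} convex, so every edge must satisfy $\{d(u),d(v)\}=\{1,m\}$; I would then argue that a graph in which each edge joins a degree-$m$ vertex to a degree-$1$ vertex can have only a single vertex of degree $m$, incident to all $m$ edges, which forces $G\cong K_{1,m}\cup S_{n-m-1}$ uniquely. For $t=2$ the edge-term degenerates to the linear quantity $d(u)+d(v)$, so equality merely demands $d(u)+d(v)=m+1$ on every edge, equivalently that any two edges of $G$ share a common vertex. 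A graph whose edges pairwise intersect is, together with isolated vertices, either a star or a triangle; this yields exactly the two extremal graphs $K_{1,m}\cup S_{n-m-1}$ and, only when $m=3$, $K_3\cup S_{n-3}$, as claimed. The care needed to keep the strictly convex regime $t>2$ separate from the flat regime $t=2$ is what makes the equality analysis the crux of the argument.
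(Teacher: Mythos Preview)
Your argument is correct. Note, however, that the paper does not supply its own proof of this lemma: it is quoted as a result of Ismailescu and Stefanica and used as a black box, so there is no in-paper proof to compare against.

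That said, the route you take is clean and self-contained. The edge-decomposition identity $\sigma_t(G)=\sum_{uv\in E(G)}\bigl(d(u)^{t-1}+d(v)^{t-1}\bigr)$ together with the per-edge bound $d(u)+d(v)\le m+1$ reduces the problem to a one-edge optimization, and convexity of $x\mapsto x^{t-1}$ immediately gives the upper bound $m^t+m$, achieved by the star. Your equality analysis is also sound: for $t>2$ strict convexity forces $\{d(u),d(v)\}=\{1,m\}$ on every edge, and a short counting argument shows there is a unique degree-$m$ vertex; for $t=2$ the edge-term is linear, so equality only requires $d(u)+d(v)=m+1$ everywhere, i.e.\ that the edges pairwise intersect, and the classical classification of such graphs (a star, or a triangle when $m=3$) yields exactly the stated exception. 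Thus you have provided a complete proof where the paper gives only a citation.
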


Let $B$ be a bipartite graph. We use $\overline B$ to denote the
bipartite graph on the same partition as $B$ such that two vertices
in $\overline B$ are adjacent if and only if they are not adjacent
in $B$.

\begin{lemma}
{Let $B$ be a bipartite graph in $\mathscr{B}(n,m,k)$. Then
$\sigma_2(B)$ attains the maximum value among all the graphs in
$\mathscr{B}(n,m,k)$ if and only if $\sigma_2(\overline B)$ attains
the maximum value among all the graphs in
$\mathscr{B}(n,k(n-k)-m,k)$.}
\end{lemma}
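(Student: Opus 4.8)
The plan is to reduce the \emph{if and only if} to a single algebraic identity showing that $\sigma_2(B)$ and $\sigma_2(\overline B)$ differ by a constant depending only on $n$, $m$, and $k$. First I would fix the bipartition $(X,Y)$ with $|X|=k$ and $|Y|=n-k$, and record the complementary degree relations. Since a vertex $x\in X$ is joined in $\overline B$ to exactly those vertices of $Y$ to which it is not joined in $B$, and its maximum possible degree is $|Y|=n-k$, we have $d_{\overline B}(x)=(n-k)-d_B(x)$; symmetrically, $d_{\overline B}(y)=k-d_B(y)$ for every $y\in Y$.

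Next I would substitute these into
$$
\sigma_2(\overline B)=\sum_{x\in X}\bigl(d_{\overline B}(x)\bigr)^2+\sum_{y\in Y}\bigl(d_{\overline B}(y)\bigr)^2,
$$
expand each square, and simplify using the two edge-counting identities $\sum_{x\in X}d_B(x)=\sum_{y\in Y}d_B(y)=m$ (in a bipartite graph each edge contributes exactly $1$ to each side's degree sum). Collecting the pieces $\sum_{x}d_B(x)^2+\sum_{y}d_B(y)^2=\sigma_2(B)$, the constant part $k(n-k)^2+(n-k)k^2=nk(n-k)$, and the linear part $-2(n-k)m-2km=-2mn$, a short computation yields the clean identity
$$
\sigma_2(\overline B)=\sigma_2(B)+nk(n-k)-2mn.
$$

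The key structural observation is then that the additive term $nk(n-k)-2mn$ is a fixed constant across the entire class $\mathscr{B}(n,m,k)$, and that the map $B\mapsto\overline B$ is a bijection (indeed an involution) between $\mathscr{B}(n,m,k)$ and $\mathscr{B}(n,k(n-k)-m,k)$, since $\overline B$ has $k(n-k)-m$ edges on the same partition. Therefore $\sigma_2(B)$ attains its maximum over $\mathscr{B}(n,m,k)$ exactly when $\sigma_2(\overline B)$ attains its maximum over $\mathscr{B}(n,k(n-k)-m,k)$, which is precisely the stated equivalence. Because the argument collapses to one identity together with a bijection, there is no genuine obstacle here; the only point demanding care is bookkeeping the constant term, especially using that each of the two partite degree sums equals $m$ (not $2m$) so that the linear term comes out as $-2mn$.
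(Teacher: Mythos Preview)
Your proposal is correct and follows essentially the same approach as the paper: both arguments fix the bipartition, use the complementary degree relations $d_{\overline B}(x)=(n-k)-d_B(x)$ and $d_{\overline B}(y)=k-d_B(y)$, expand the squares, and arrive at the identity $\sigma_2(\overline B)=\sigma_2(B)+nk(n-k)-2mn$ (the paper writes the equivalent form $\sigma_2(B)=n(2m+k^2-nk)+\sigma_2(\overline B)$), from which the equivalence is immediate. Your explicit mention of the bijection $B\mapsto\overline B$ between $\mathscr{B}(n,m,k)$ and $\mathscr{B}(n,k(n-k)-m,k)$ makes the final step a touch more transparent than the paper's ``the result follows immediately,'' but the substance is identical.
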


\begin{proof}
Let $(X,Y)$ be the bipartition of $B$. Suppose that
$X=\{x_1,x_2,\dots,x_{k}\}$ and $Y=\{y_1,y_2,\dots,y_{n-k}\}$.
Denote the degree of $x_i$ in $\overline B$ by $\overline d(x_i)$
for $i=1,2,\ldots,k$ and the degree of $y_j$ in $\overline B$ by
$\overline d(y_j)$ for $j=1,2,\ldots,n-k$. Then we have
$$
d(x_i)+\overline d(x_i)=n-k \mbox{\ \ for $i=1,2,\ldots,k$},
d(y_j)+\overline d(y_j)=k \mbox{\ \ for $j=1,2,\ldots,n-k$},
$$
and
$$
\sum^{k}_{i=1}\overline d(x_i)=\sum^{n-k}_{j=1}\overline
d(y_j)=k(n-k)-m.
$$
Therefore,
\begin{eqnarray*}
\sigma_2(B) &=& \sum^{k}_{i=1}d(x_i)^2+\sum^{n-k}_{j=1}d(y_j)^2\\
                     &=& \sum^{k}_{i=1}(n-k-\overline d(x_i))^2+\sum^{n-k}_{j=1}(k-\overline d(y_j))^2\\
                     &=& k(n-k)^2-2(n-k)\sum^{k}_{i=1}\overline d(x_i)+\sum^{k}_{i=1}\overline d(x_i)^2\\
                     & & +(n-k)k^2-2k\sum^{n-k}_{j=1}\overline d(y_j)+\sum^{n-k}_{j=1}\overline d(y_j)^2\\
                     &=& n(2m+k^2-nk)+\sum^{k}_{i=1}\overline d(x_i)^2+\sum^{n-k}_{j=1}\overline d(y_j)^2\\
                     &=& n(2m+k^2-nk)+\sigma_2(\overline B).
\end{eqnarray*}
The result follows immediately.
\end{proof}

\section{Main results}

We first determine the bipartite graphs with few edges where the sum
of squares of degrees is maximum.

\begin{theorem}
Let $n,m$ be two integers with $n\geq 2$ and $0\leq{m}\leq{n-1}$.
Suppose that $\sigma_2(B^\ast)$ attains the maximum value among all
the graphs in $\mathscr{B}(n,m)$. Then $B^\ast\cong K_{1,m}\cup
S_{n-m-1}$.
\end{theorem}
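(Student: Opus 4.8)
The plan is to prove the statement in one stroke, with no case analysis on $m$, by reformulating $\sigma_2$ as a sum over edges. First I would record the standard identity
\[
\sigma_2(B)=\sum_{v\in V(B)}d(v)^2=\sum_{uv\in E(B)}\bigl(d(u)+d(v)\bigr),
\]
obtained by writing $d(v)^2=\sum_{u\,:\,uv\in E(B)}d(v)$ and summing over $v$. The value of this reformulation is that each summand admits a clean uniform bound: in a simple graph with $m$ edges the only edge incident to both $u$ and $v$ is $uv$ itself, so the number of edges meeting $\{u,v\}$ equals $d(u)+d(v)-1$, which is at most $m$. Hence $d(u)+d(v)\le m+1$ for every edge $uv$, with equality precisely when every edge of $B$ is incident to $u$ or to $v$.

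Combining these two facts gives $\sigma_2(B)\le m(m+1)$ for every $B\in\mathscr{B}(n,m)$ with $m\ge 1$. Since $\sigma_2(K_{1,m}\cup S_{n-m-1})=m^2+m=m(m+1)$, and since the hypothesis $m\le n-1$ is exactly what guarantees that $K_{1,m}\cup S_{n-m-1}$ fits on $n$ vertices and so lies in $\mathscr{B}(n,m)$, this bound is attained; therefore $K_{1,m}\cup S_{n-m-1}$ realizes the maximum. (The degenerate case $m=0$ is immediate, as $\mathscr{B}(n,0)$ contains only the edgeless graph $K_{1,0}\cup S_{n-1}$.) I would stress that this edge-sum argument is insensitive to whether $m\le n-2$ or $m=n-1$, so it covers in particular the range lying outside the Ismailescu--Stefanica result (Lemma 3); alternatively, for $m\le n-2$ one could simply quote Lemma 3 and observe that its second extremal graph $K_3\cup S_{n-3}$ is not bipartite.

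The real content is uniqueness, i.e. the equality case, and this is the step I expect to be the main obstacle. Equality $\sigma_2(B)=m(m+1)$ forces $d(u)+d(v)=m+1$ on every edge, which by the observation above means that every edge of $B$ meets every other edge; equivalently, $B$ has no two independent edges. Here bipartiteness does the decisive work: I would argue that a bipartite graph with at least one edge and no two independent edges must be a star. Concretely, fix an edge $ab$ with $a\in X$ and $b\in Y$; if some edge used $a$ but not $b$ while another used $b$ but not $a$, those two edges would be independent, a contradiction, so either all edges pass through $a$ or all pass through $b$. Thus the non-isolated part of $B$ is a single star $K_{1,m}$, giving $B\cong K_{1,m}\cup S_{n-m-1}$. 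The triangle $K_3$, which is the only other graph realizing equality when $m=3$, is excluded automatically since it is not bipartite -- precisely the reason the bipartite extremal graph is unique exactly where the general one of Lemma 3 is not.
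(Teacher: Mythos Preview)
Your proof is correct and takes a genuinely different route from the paper's. The paper first invokes Lemma~2 to identify $B^l(n,m,k_0)$ as a maximizer, recognizes it as $K_{1,m}\cup S_{n-m-1}$, and then establishes uniqueness via a case split: for $0\le m\le n-2$ it quotes the Ismailescu--Stefanica result (Lemma~3) on general graphs, discarding the non-bipartite $K_3$; for $m=n-1$ it uses an ad hoc reduction, appending an isolated vertex to embed the problem back into the $m\le n-2$ range. Your argument instead bounds $\sigma_2(B)=\sum_{uv\in E}(d(u)+d(v))\le m(m+1)$ directly and reads off both the extremal value and the equality case (no two independent edges, hence a star in the bipartite setting) in one stroke, with no appeal to Lemmas~2 or~3 and no split at $m=n-1$. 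The gain is a fully self-contained and uniform proof of Theorem~1; the trade-off is that the paper's approach deliberately exercises the $B^l(n,m,k)$ machinery and Lemma~2 that are indispensable for Theorem~2, so within the paper its proof is not wasted effort even if it is less economical for Theorem~1 in isolation.
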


\begin{proof}
From Lemma 2 we know that $\sigma_2(B^l(n,m,k_0))$ attains the
maximum value among all the bipartite graphs in $\mathscr{B}(n,m)$,
where $k_0=max\{k|m=qk+r,0\leq r<k, \lceil
\frac{n}{2}\rceil\leq{k}\leq n-q-sgn(r)\}$. So we have
$\sigma_2(B^l(n,m,k_0))=\sigma_2(B^*)$. We distinguish two cases.

\begin{case}
$0\leq{m}\leq{n-2}$.
\end{case}

Let $m=q_0k_0+r_0$, where $0\leq r_0<k_0$. Then we can conclude
$k_0=n-1$, $q_0=0$ and $r_0=m$. Hence, $B^l(n,m,k_0)=K_{1,m}\cup
S_{n-m-1}$. By Lemma 3 we know that $K_{1,m}\cup S_{n-m-1}$ is the
unique bipartite graph with the maximum sum of squares of degrees in
$\mathscr{B}(n,m)$. So we have $B^\ast\cong K_{1,m}\cup S_{n-m-1}$.

\begin{case}
$m=n-1$.
\end{case}

In this case we have $B^l(n,m,k_0)=K_{1,n-1}$. Therefore,
$\sigma_2(K_{1,n-1})=\sigma_2(B^*)$. If $B^*\not\cong K_{1,n-1}$,
then
$$
\sigma_2(K_{1,n-1}\cup
S_1)=\sigma_2(K_{1,n-1})=\sigma_2(B^*)=\sigma_2(B^*\cup S_1),
$$
which is a contradiction to the result in the Case 1.
\end{proof}

\begin{theorem}
Let $n,m$ be two integers with $n\geq 2$, $n\leq{m}\leq \lfloor
\frac{n}{2}\rfloor\lceil \frac{n}{2}\rceil$ and
$k_0=max\{k|m=qk+r,0\leq r<k, \lceil \frac{n}{2}\rceil\leq{k}\leq
n-q-sgn(r)\}$. Suppose that $\sigma_2(B^\ast)$ attains the maximum
value among all the graphs in
$\mathscr{B}(n,m)$. Then\\
$(a)$ $B^\ast\cong B^l(n,m,k_0)$, or $B^l(n,m,n-k_0)$ if $m>(n-k_0)(k_0-1)$;\\
$(b)$ $B^\ast\cong B^l(n,m,k_0)$, $B^l(n,m,n-k_0)$, or $B^l(n,m,k_0-1)$ if $m=(n-k_0)(k_0-1)$;\\
$(c)$ $B^\ast\cong B^l(n,m,k_0)$ if $m<(n-k_0)(k_0-1)$.
\end{theorem}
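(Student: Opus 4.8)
The plan is to analyse each candidate through its partition class and to push every computation onto the \emph{complement}, where Theorem 1 already pins down the extremal graphs.

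First I would reduce to the graphs $B^l$. If $B^\ast$ is extremal with partition sides of sizes $a\le b$ (so $b\ge\lceil n/2\rceil$), then $B^\ast\in\mathscr{B}(n,m,b)$, Lemma 1 gives $\sigma_2(B^\ast)\le\sigma_2(B^l(n,m,b))$, and Lemma 2 gives $\sigma_2(B^l(n,m,b))\le\sigma_2(B^l(n,m,k_0))=:M$. Hence $B^\ast$ must attain the maximum \emph{inside} its class $\{a,b\}$, and that class must satisfy $\sigma_2(B^l(n,m,b))=M$. Two things then remain: decide which $b$ give $\sigma_2(B^l(n,m,b))=M$, and, for those $b$, list all class-maximizers.

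Both are handled at once by complementation. Put $\overline{m}_b=b(n-b)-m$. By Lemma 4, $B$ is a maximizer in $\mathscr{B}(n,m,b)$ iff $\overline{B}$ is a maximizer in $\mathscr{B}(n,\overline{m}_b,b)$, and the identity in its proof reads $\sigma_2(B)=n(2m+b^2-nb)+\sigma_2(\overline{B})$. When the star $K_{1,\overline{m}_b}$ fits in this class, i.e.\ when $\overline{m}_b\le b\;(\le n-1)$, Theorem 1 says the maximizers of $\sigma_2$ over $\mathscr{B}(n,\overline{m}_b)$ are exactly the stars; being in the class, the star is the class maximizer, so by uniqueness the class-maximizers of the complement problem are precisely the star with its centre on the $b$-side and, if moreover $\overline{m}_b\le a$, also on the $a$-side. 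Since $\overline{B^l(n,m',k)}=B^l(n,k(n-k)-m',k)$, complementing back shows the class-maximizers are $B^l(n,m,b)$, together with $B^l(n,m,a)$ exactly when $\overline{m}_b\le a$, i.e.\ when $m\ge(n-b)(b-1)$. Feeding $\sigma_2(\mathrm{star})=\overline{m}_b(\overline{m}_b+1)$ into the identity collapses the value to
$$\sigma_2(B^l(n,m,b))=mn+\overline{m}_b^{2}-(n-1)\overline{m}_b,$$
a downward quadratic in $\overline{m}_b$ equal to $mn$ only at $\overline{m}_b=0$ and $\overline{m}_b=n-1$ and symmetric about $(n-1)/2$.

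This quadratic drives the across-class comparison. Because $k_0$ is the largest feasible size, $m>(k_0+1)(n-k_0-1)$ forces $\overline{m}_{k_0}\le 2k_0-n\le k_0$, so the formula is valid at $b=k_0$ and $\overline{m}_{k_0}$ is the \emph{smallest} of the $\overline{m}_b$. Two valid classes share the value $M$ iff their $\overline{m}_b$ are equal or sum to $n-1$; with the fit constraint $\overline{m}_b\le b$ and the bound $\overline{m}_{k_0}\le 2k_0-n$, the only solutions are $b=k_0$ always, and $b=k_0-1$ exactly when $\overline{m}_{k_0}=n-k_0$ (equivalently $m=(n-k_0)(k_0-1)$), where $\overline{m}_{k_0-1}=k_0-1=b$ hits the symmetric root while just fitting. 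Comparing $\overline{m}_{k_0}$ with $a=n-k_0$, i.e.\ $m$ with $(n-k_0)(k_0-1)$, then yields (a), (b), (c): the second class-maximizer $B^l(n,m,n-k_0)$ appears iff $\overline{m}_{k_0}\le a$ (cases (a),(b)), while the extra class $\{k_0-1,n-k_0+1\}$, contributing $B^l(n,m,k_0-1)$, appears only at the boundary (case (b)). The main obstacle is the across-class elimination: ruling out every $b\le k_0-2$ (and $b=k_0-1$ off the boundary) attaining $M$. These classes are active only for large $k_0$ (roughly $k_0>2n/3$), and there the formula is typically \emph{invalid} because the complementary star no longer fits ($\overline{m}_b>b$), forcing a strict loss; where it is valid one must check that the symmetric root $\overline{m}_b=n-1-\overline{m}_{k_0}$ cannot be met without violating $\overline{m}_b\le b$ or $\overline{m}_{k_0}\le 2k_0-n$. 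Carrying out this book-keeping, together with verifying $k_0-1\ge\lceil n/2\rceil$ at the boundary so that $\{k_0-1,n-k_0+1\}$ is a genuine new class, is the technical heart of the argument.
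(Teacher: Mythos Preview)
Your within-class analysis is exactly the paper's: once one knows that the optimal $B^\ast$ lies in some $\mathscr{B}(n,m,b)$ with $f(b):=\sigma_2(B^l(n,m,b))=M$, both you and the paper pass to the bipartite complement via Lemma~4 and invoke Theorem~1 on the star to list the maximizers inside that class. Your clean formula $f(b)=mn+\overline m_b^{\,2}-(n-1)\overline m_b$ (valid when $\overline m_b\le b$) and the resulting identification $\overline m_{k_0}=n-k_0\Leftrightarrow m=(n-k_0)(k_0-1)$ are a nice repackaging of this step.

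Where your plan diverges from the paper, and where it has a genuine gap, is the \emph{across-class} elimination. The paper does not rely on the star formula there at all: it writes out $f(k)=q(k-1)(k+qk-2m)+m^2+m$ directly and computes $f(k+1)-f(k)$ in closed form, obtaining the two expressions (1) and (2) depending on whether $\lfloor m/k\rfloor-\lfloor m/(k+1)\rfloor$ is $0$ or $1$. This gives monotonicity of $f$ on $[\lceil n/2\rceil,k_0]$ together with an explicit equality criterion, which is exactly what is needed to show $f(k_0)=f(k_0-1)$ only at the boundary and $f(k_0-1)>f(k_0-2)$ always. Your route instead tries to compare $f(b)$ with $M$ through the quadratic $g(\overline m_b)$, but that formula is \emph{only} available when $\overline m_b\le b$, i.e.\ when $m\ge b(n-b-1)$. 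This fails already at $b=k_0-1$ in case~(c) (indeed $\overline m_{k_0-1}\le k_0-1$ is \emph{equivalent} to $m\ge (n-k_0)(k_0-1)$), and for smaller $b$ one can have $\overline m_b>n-1$ (e.g.\ $n=10$, $m=10$, $b=5$ gives $\overline m_b=15$), so Theorem~1 does not even apply to the complement. Your assertion that invalidity of the star formula ``forces a strict loss'' is therefore not justified: knowing that the class-maximum in $\mathscr{B}(n,\overline m_b,b)$ is below the star value only yields $f(b)<mn+g(\overline m_b)$, which says nothing about $f(b)$ versus $M$ once $\overline m_b$ has crossed the axis of symmetry. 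In short, the ``book-keeping'' you defer is not book-keeping; it needs an independent monotonicity argument, and the paper's direct difference computation (1)--(2) is precisely that missing ingredient.
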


\begin{proof}

Let $m=q_0k_0+r_0=q'_0(k_0+1)+r'_0$, where  $0\leq r_0<k_0$, $0\leq
r'_0<k_0+1$. We first prepare three claims.

\begin{claim}
$m>(k_0+1)(n-k_0-1)$.
\end{claim}

\begin{proof}
Suppose that $m\leq (k_0+1)(n-k_0-1)$. Then $B^l(n,m,k_0+1)$ exists
in $\mathscr{B}(n,m,k_0+1)$. This implies that $k_0+1\leq
n-q'_0-sgn(r'_0)$, contradicting the maximum of $k_0$.
\end{proof}

\begin{claim}
{There exist no isolated vertices in $B^l(n,m,k_0)$.}
\end{claim}

\begin{proof}
Suppose that there exists an isolated vertex in $B^l(n,m,k_0)$.
Since $n\leq m$, we have $q_0\geq 1$. Let $(X_0,Y_0)$ be the
bipartition of $B^l(n,m,k_0)$ with $|X_0|=k_0$. Then by the
definition of $B^l(n,m,k_0)$, the isolated vertex must be in $Y_0$.
Hence we have $m\leq k_0(n-k_0-1)\leq (k_0+1)(n-k_0-1)$,
contradicting Claim 1.
\end{proof}

Let $k\geq \lceil \frac{n}{2}\rceil$ be an integer. Suppose that
$m=qk+r=q'(k+1)+r'$, where $0\leq r<k$, $0\leq r'<k+1$. Then we have
$q=\lfloor\frac{m}{k}\rfloor$ and $q'=\lfloor\frac{m}{k+1}\rfloor$.

\begin{claim}
$\lfloor\frac{m}{k}\rfloor-\lfloor\frac{m}{k+1}\rfloor\leq 1$.
\end{claim}

\begin{proof}
If $\lfloor\frac{m}{k}\rfloor-\lfloor\frac{m}{k+1}\rfloor\geq 2$,
then
\begin{eqnarray}
\nonumber r^\prime &=& \lfloor \frac{m}{k}\rfloor k+r-\lfloor\frac{m}{k+1}\rfloor(k+1)\\
\nonumber          &\geq& \lfloor \frac{m}{k}\rfloor k+r-(\lfloor\frac{m}{k}\rfloor-2)(k+1)\\
\nonumber          &=& r+2(k+1)- \lfloor \frac{m}{k}\rfloor\\
\nonumber          &\geq& r+2(k+1)-k\\
\nonumber          &>& k+1,
\end{eqnarray}
a contradiction.
\end{proof}

By the definition of $B^l(n,m,k)$, we have
\begin{eqnarray}
\nonumber \sigma_2(B^l(n,m,k)) &=& r(q+1)^2+(k-r)q^2+qk^2+r^2\\
\nonumber                      &=& (m-qk)(q+1)^2+(k+qk-m)q^2+qk^2+(m-qk)^2\\
\nonumber                      &=& q(k-1)(k+qk-2m)+m^2+m\\
\nonumber                      &=&\lfloor\frac{m}{k}\rfloor(k-1)
(k+\lfloor\frac{m}{k}\rfloor k-2m)+m^2+m.
\end{eqnarray}
Set $f(k)=\sigma_2(B^l(n,m,k))$. Then
$$ f(k+1)-f(k)=\lfloor\frac{m}{k+1}\rfloor k(k+1+\lfloor \frac{m}{k+1}\rfloor
(k+1)-2m)-\lfloor\frac{m}{k}\rfloor(k-1)(k+\lfloor\frac{m}{k}\rfloor
k-2m).$$ If
$\lfloor\frac{m}{k}\rfloor-\lfloor\frac{m}{k+1}\rfloor=0$, then
\begin{equation}\label{1}
f(k+1)-f(k)=2\lfloor\frac{m}{k}\rfloor(\lfloor\frac{m}{k}\rfloor
k+k-m)>0.
\end{equation}
If $\lfloor\frac{m}{k}\rfloor-\lfloor \frac{m}{k+1}\rfloor=1$, then
\begin{equation}\label{2}
f(k+1)-f(k)=2(\lfloor\frac{m}{k}\rfloor-k)(\lfloor\frac{m}{k}\rfloor
k-m)\geq 0.
\end{equation}
Thus, $f(k)$ is a nondecreasing function. So we have
\begin{equation}\label{3}
f(k_0)\geq f(k_0-1)\geq f(k_0-2)\geq\dots\geq f(\lceil
\frac{n}{2}\rceil).
\end{equation}

By Lemma 1, we know that
$\sigma_2(B^*)=\max\{f(k_0),f(k_0-1),\ldots,f(\lceil\frac{n}{2}\rceil)\}$.
Let $(X^\ast,Y^\ast)$ be the bipartition of $B^\ast$ with
$|X^\ast|\geq \lceil n/2\rceil$. We distinguish two cases.

\setcounter{case}{0}
\begin{case}
$k_0=\lceil \frac{n}{2}\rceil$.
\end{case}

First, we have $n=2k_0$ or $2k_0-1$. It is clear that
\begin{equation}\label{4}
m\leq k_0(n-k_0).
\end{equation}

Suppose that $n=2k_0$. Then by Claim 1 and (4) we have
$$
k_0^2-1<m\leq k_0^2,
$$
i.e., $m=k_0^2$. This means that $B^l(n,m,k_0)$ is the
unique graph in $\mathscr{B}(n,m)$. So we have $B^*\cong
B^l(n,m,k_0)$.

Suppose that $n=2k_0-1$. Then by Claim 1 and (4) we have
$$
(k_0+1)(k_0-2)<m\leq k_0(k_0-1).
$$
This implies that $m=k_0(k_0-1)$ or $k_0(k_0-1)-1$. In either cases,
$B^l(n,m,k_0)$ is the unique graph in $\mathscr{B}(n,m)$. So we have
$B^*\cong B^l(n,m,k_0)$.

\begin{case}
$k_0>\lceil \frac{n}{2}\rceil$.
\end{case}

\begin{subcase}
$f(k_0)=f(k_0-1)$.
\end{subcase}

Let $m=q_0''(k_0-1)+r_0''=q_0'''(k_0-2)+r_0'''$, where $0\leq
r_0''<k_0-1$, $0\leq r_0'''<k_0-2$. Then we have
$q_0''=\lfloor\frac{m}{k_0-1}\rfloor$ and
$q_0'''=\lfloor\frac{m}{k_0-2}\rfloor$.

Since $f(k_0)=f(k_0-1)$, it follows from (1) and (2) that
$$
f(k_0)-f(k_0-1)=2(\lfloor\frac{m}{k_0-1}\rfloor-(k_0-1))(\lfloor
\frac{m}{k_0-1}\rfloor (k_0-1)-m)=0.
$$
So we have
$$
\lfloor\frac{m}{k_0-1}\rfloor-(k_0-1)=0 \mbox{\ \ or\ \ }
\lfloor\frac{m}{k_0-1}\rfloor (k_0-1)-m=0.
$$

Suppose that $\lfloor\frac{m}{k_0-1}\rfloor-(k_0-1)=0$. Since
$k_0-1\geq\lceil\frac{n}{2}\rceil$, we have
\begin{eqnarray*}
m \geq (k_0-1)^2\geq (\lceil\frac{n}{2}\rceil)^2.
\end{eqnarray*}
By the condition
$m\leq\lfloor\frac{n}{2}\rfloor\lceil\frac{n}{2}\rceil$, we can
easily deduce that $m=(k_0-1)^2$. Again, with
$k_0-1\geq\lceil\frac{n}{2}\rceil$, we have
\begin{eqnarray*}
m=(k_0-1)^2>k_0(k_0-2)\geq k_0(n-k_0),
\end{eqnarray*}
a contradiction.

Suppose that $\lfloor\frac{m}{k_0-1}\rfloor(k_0-1)-m=0$.
Then we have $r_0''=0$. Since $f(k_0)=f(k_0-1)$, by (1) and (2)
we can conclude that
$\lfloor\frac{m}{k_0-1}\rfloor-\lfloor\frac{m}{k_0}\rfloor=1$.

Suppose that $r_0=0$. Then
$$
m=q_0k_0=\lfloor\frac{m}{k_0-1}\rfloor(k_0-1)=(q_0+1)(k_0-1).
$$
This implies that $q_0=k_0-1$. It follows from Claim 2 that
$k_0=\lceil\frac{n}{2}\rceil$, a contradiction.

Suppose $r_0\neq0$. Then by Claim 2, we can conclude that
$k_0+q_0+1=n$. So we have
\begin{equation}\label{5}
m=\lfloor\frac{m}{k_0-1}\rfloor(k_0-1)=(\lfloor\frac{m}{k_0}\rfloor+1)(k_0-1)=(n-k_0)(k_0-1).
\end{equation}

Suppose that $k_0-2\geq \lceil \frac{n}{2}\rceil$ and
$f(k_0)=f(k_0-1)=f(k_0-2)$. Then it follows from (1) and (2) that
$$f(k_0-1)-f(k_0-2)=2(\lfloor\frac{m}{k_0-2}\rfloor-(k_0-2))(\lfloor\frac{m}{k_0-2}\rfloor(k_0-2)-m)=0.$$
As the proof of $\lfloor\frac{m}{k_0-1}\rfloor-(k_0-1)\neq 0$ for the case
 $f(k_0)=f(k_0-1)$, we can prove that
$\lfloor\frac{m}{k_0-2}\rfloor-(k_0-2)\neq0$. So let us now assume
that $\lfloor \frac{m}{k_0-2}\rfloor (k_0-2)-m=0$. Then we have
$r_0'''=0$. Since $f(k_0-1)=f(k_0-2)$, by (1) and (2) we can
conclude that
$\lfloor\frac{m}{k_0-2}\rfloor-\lfloor\frac{m}{k_0-1}\rfloor=1$.
Then, by (5), we have
$$
m=(n-k_0)(k_0-1)=\lfloor\frac{m}{k_0-2}\rfloor
(k_0-2)=(n-k_0+1)(k_0-2).
$$
This implies that $n=2k_0-2$, contradicting our assumption
$k_0-2\geq \lceil \frac{n}{2}\rceil$.

Therefore, we have $f(k_0)=f(k_0-1)>f(k_0-2)$. This means that  $B^*\in \mathscr{B}(n,m,k_0)$ or
$\mathscr{B}(n,m,k_0-1)$.

Suppose that $B^*\in \mathscr{B}(n,m,k_0)$.  Then $\sigma_2(B^*)$
attains the maximum value among all the graphs in
$\mathscr{B}(n,m,k_0)$. Note that $m=(n-k_0)(k_0-1)$. So we have
$k_0(n-k_0)-m=n-k_0$. It follows from Lemma 4 that
$\sigma_2(\overline {B^*})$ attains the maximum value among all the
graphs in $\mathscr{B}(n,n-k_0,k_0)$. By Theorem 1, we obtain that
$\overline {B^*}\cong K_{1,n-k_0}\cup S_{k_0-1}$. If the $n-k_0$
pendent vertices of $\overline {B^*}$ are in $X^\ast$, then by Lemma 4, we have
$B^\ast\cong B^l(n,m,k_0)$. If the $n-k_0$ pendent vertices of
$\overline {B^*}$ are in $Y^\ast$, then by Lemma 4, we have $B^\ast\cong
B^l(n,m,n-k_0)$, which ia also a graph in $\mathscr{B}(n,m,k_0)$

Suppose that $B^*\in \mathscr{B}(n,m,k_0-1)$. Then $\sigma_2(B^*)$
attains the maximum value among all the graphs in
$\mathscr{B}(n,m,k_0-1)$. Note that $m=(n-k_0)(k_0-1)$. Then we have
$(k_0-1)(n-k_0+1)-m=k_0-1$. It follows from Lemma 4 that
$\sigma_2(\overline {B^*})$ attains the maximum value among all the
graphs in $\mathscr{B}(n,k_0-1,k_0-1)$. By Theorem 1, we obtain that
$\overline {B^*}\cong K_{1,k_0-1}\cup S_{n-k_0}$. Since $k_0-1\geq
\lceil\frac{n}{2}\rceil$, we have $k_0-1\geq n-k_0+1$. So all the
pendent vertices are in $X^*$. By Lemma 4, we have  $B^\ast\cong
B^l(n,m,k_0-1)$.

\begin{subcase}
$f(k_0)>f(k_0-1)$.
\end{subcase}

In this case, we have $B^*\in \mathscr{B}(n,m,k_0)$ and
$\sigma_2(B^*)$ attains the maximum value among all the graphs in
$\mathscr{B}(n,m,k_0)$. From Claim 3 we know that
$\lfloor\frac{m}{k_0-1}\rfloor-\lfloor\frac{m}{k_0}\rfloor\leq 1$.
Suppose
$\lfloor\frac{m}{k_0-1}\rfloor-\lfloor\frac{m}{k_0}\rfloor=0$. Then
we have $q_0''=q_0$ and  $r_0''=q_0+r_0$. By $r_0''<k_0-1$, we get
$r_0<k_0-q_0-1$. If $r_0=0$, then
$$
m=(n-k_0)k_0>(n-k_0)(k_0-1).
$$
If $r_0>0$, then
\begin{align*}
m&=(n-k_0-1)k_0+r_0\\
&=(n-k_0)(k_0-1)+n-2k_0+r_0\\
&<(n-k_0)(k_0-1).
\end{align*}
Suppose
$\lfloor\frac{m}{k_0-1}\rfloor-\lfloor\frac{m}{k_0}\rfloor=1$.
Then
$m-\lfloor\frac{m}{k_0-1}\rfloor(k_0-1)>0$. Since
$\lfloor\frac{m}{k_0-1}\rfloor\geq n-k_0$, we have
$$
m>(n-k_0)(k_0-1).
$$
Therefore, in the following we consider two subcases.

\begin{subsubcase}
$m>(n-k_0)(k_0-1)$.
\end{subsubcase}

By Lemma 4 we know that $\sigma_2(\overline {B^*})$ attains the
maximum value among all the graphs in
$\mathscr{B}(n,k_0(n-k_0)-m,k_0)$. Since $k_0(n-k_0)-m<n-k_0\leq
n-1$, it follows from Theorem 1 that $B^*\cong
k_{1,k_0(n-k_0)-m}\cup S_{n-k_0(n-k_0)+m-1}$. If the $k_0(n-k_0)-m$
pendent vertices are in $X^\ast$, then by Lemma 4, we have $B^*\cong
B^l(n,m,k_0)$. If the $k_0(n-k_0)-m$ pendent vertices are in
$Y^\ast$, then by Lemma 4, we have $B^\ast\cong B^l(n,m,n-k_0)$,
which ia also a graph in $\mathscr{B}(n,m,k_0)$.

\begin{subsubcase}
$m<(n-k_0)(k_0-1)$.
\end{subsubcase}

It follows from Lemma 4 that $\sigma_2(\overline {B^*})$ attains the
maximum value among all the graphs in
$\mathscr{B}(n,k_0(n-k_0)-m,k_0)$. By Claim 1, we can conclude that
$k_0(n-k_0)-m<2k_0-n+1\leq n-1$. Then by Theorem 1 we have $B^*\cong
k_{1,k_0(n-k_0)-m}\cup S_{n-k_0(n-k_0)+m-1}$. Since
$k_0(n-k_0)-m>n-k_0$, we know that the $k_0(n-k_0)-m$ pendent
vertices are in $X^\ast$. By Lemma 4, we have $B^\ast\cong
B^l(n,m,k_0)$.

The proof is complete.
\end{proof}

%

\end{document}